\author[C. Goel, S. Kuhlmann, B. Reznick]{Charu Goel, Salma Kuhlmann, Bruce Reznick}
\address{Department of Mathematics and Statistics, University of
Konstanz, Universit\"{a}tsstrasse 10, 78457 Konstanz, Germany}
\email{charu.goel@uni-konstanz.de}
\address{Department of Mathematics and Statistics, University of
Konstanz, Universit\"{a}tsstrasse 10, 78457 Konstanz, Germany}
\email{salma.kuhlmann@uni-konstanz.de}
\address{Department of Mathematics, University of
Illinois at Urbana-Champaign, Urbana, IL 61801}
\email{reznick@math.uiuc.edu}
\title[On the Choi-Lam analogue of Hilbert's 1888 theorem for Symmetric Forms]
{On the Choi-Lam analogue of Hilbert's 1888 theorem for Symmetric Forms\ }
\date{}
\keywords{Positive Polynomials, Sums of Squares, Symmetric forms}
\subjclass[2010]{11E76, 11E25, 05E05}
\begin{document}
\maketitle

\theoremstyle{definition}
  \newtheorem{thm}{Theorem}[section]
 \newtheorem{df}[thm]{Definition}%[section]
 \newtheorem{nt}[thm]{Notation}%[section]
 \newtheorem{prop}[thm]{Proposition}%[section]
 \newtheorem{rk}[thm]{Remark}%[section]
 \newtheorem{lem}[thm]{Lemma}
 \newtheorem{obs}[thm]{Observation}
 \newtheorem{cor}[thm]{Corollary}
 \newtheorem{nthng}[thm]{}

\begin{abstract} 
A famous theorem of Hilbert from 1888 states that a positive semidefinite (psd) real form is a sum of squares (sos) of real forms if and only if $n=2$ or $d=1$ or $(n,2d)=(3,4)$, where $n$ is the number of variables and $2d$ the degree of the form. In 1976, Choi and Lam proved the analogue of Hilbert's Theorem for symmetric forms by assuming the existence of psd not sos symmetric $n$-ary quartics for $n \geq 5$. In this paper we complete their proof by constructing explicit psd not sos symmetric $n$-ary quartics for $n \geq 5$. 
\end{abstract}

\section{Introduction}
A real form (homogeneous polynomial) $f$ is called \textit{positive semidefinite} (psd) if it takes only non-negative values and it is called a \textit{sum of squares} (sos) if there exist other forms $h_j$ so that $p = h_1^2 + \cdots + h_k^2$. The question whether a real psd form can be written as a sum of squares of real forms has many ramifications and has been studied extensively. Since a psd form always has even degree, it is sufficient to consider this question for even degree forms. We refer to this question as $(\mathcal{Q})$.  The first significant result in this direction was given by D. Hilbert \cite{Hilb_1} in 1888. 
His celebrated theorem states that a psd form is sos if and only if  $n=2$ or $d=1$ or $(n,2d)=(3,4)$, where $n$ is the number of variables and $2d$ the degree of the form.

The above answer to ($\mathcal{Q}$)  
can be summarized by  the following chart:
\vspace{0.2cm}

\begin{center}
    \begin{tabular}{ | l | l | l | l | l | l | p{0.4cm} |}
    \hline
   deg $\setminus$ var & 2 & 3 & 4 & 5 & 6 & $\ldots$ \\ \hline
    2 & $\checkmark$  & $\checkmark$ & $\checkmark$ & $\checkmark$ & $\checkmark$ & $\ldots$ \\ \hline
    4& $\checkmark$ & $\checkmark$ & $\times$ & $\times$  & $\times$ & $\ldots$  \\ \hline
    6 & $\checkmark$ & $\times$& $\times$ &$\times$ & $\times$ & $\ldots$  \\ \hline
      8 & $\checkmark$ & $\times$& $\times$& $\times$ & $\times$ & $\ldots$ \\ \hline
        $\vdots$ &  $\vdots$ & $\vdots$ & $\vdots$ & $\vdots$ & $\vdots$ & $\ddots$\\ \hline
    \end{tabular}
\end{center}
\vspace{0.2cm}

 \noindent where, a tick ($\checkmark$) denotes a positive answer to ($\mathcal{Q}$), whereas a cross ($\times$)  denotes a negative  answer to ($\mathcal{Q})$. 

Let $\mathcal{P}_{n,2d}$ and $\Sigma_{n,2d}$ denote the cone of psd and sos $n$-ary $2d$-ic forms (i.e. forms of degree $2d$ in $n$ variables) respectively. Hilbert made a careful study of quaternary quartics and ternary sextics, and demonstrated that $\Sigma_{3,6}\subsetneq \mathcal{P}_{3,6}$ and $\Sigma_{4,4}\subsetneq\mathcal{P}_{4,4}$.  
Moreover he showed that 

\vspace{-0.3cm}

\begin{equation}  \text{if} \ \Sigma_{4,4} \subsetneq \mathcal{P}_{4,4} \  \text{and} \ \Sigma_{3,6}\subsetneq \mathcal{P}_{3,6}, \ \text{then} 
\nonumber \end{equation}
\begin{equation} \label{eq: basic cases inequality} \Sigma_{n,2d}\subsetneq\mathcal{P}_{n,2d} \ \text{for all} \ n \geq 3, 2d \geq 4 \ \text{and} \ (n,2d) \neq (3,4). \end{equation}

\noindent So it is sufficient to produce psd not sos forms in these two crucial cases of quaternary quartics and ternary sextics to get psd not sos forms in all remaining cases as in assertion (\ref{eq: basic cases inequality}) above.
\noindent In those two cases Hilbert described a method to produce examples of psd not sos forms, which was \textquotedblleft elaborate and unpractical\textquotedblright \ (see \cite[p.387] {CL_1}), so no explicit examples appeared in literature for next 80 years. 
Explicit examples with $(n,2d)=(3,6)$ were found by T. S. Motzkin \cite{Motz-1} in 1967 and R. M. Robinson \cite{Rob-1} in 1969; Robinson also found an explicit example with $(n,2d)=(4,4)$. M. D. Choi and T. Y. Lam \cite{Choi, CL_1, CL-2} produced many more examples in the mid 1970's. 
More examples were given later by B. Reznick \cite{Rez} and K. Schm\"udgen \cite{Schmuedgen}.

In 1976, Choi and Lam \cite{CL_1} considered the question when a psd form is sos for the special case when the form considered is moreover symmetric ($f(x_{\sigma(1)},\ldots,x_{\sigma(n)})= f(x_1,\ldots,x_n)$ $\forall \ \sigma \in S_n$). Let $S\mathcal{P}_{n,2d}$ and $S\Sigma_{n,2d}$ denote the set of symmetric psd and symmetric sos $n$-ary $2d$-ic forms respectively. 
They demonstrated that it is enough to find symmetric psd not sos forms in the two crucial cases of $n$-ary quartics for $n \geq 4$ and ternary sextics to obtain symmetric psd not sos $n$-ary $2d$-ics for all $n \geq 3, 2d \geq 4$ and $(n,2d) \neq (3,4)$ (see Proposition \ref{prop:(n,4),(3,6)Not equal Implies All}). They showed that the answer to this question is the same as the answer to $(\mathcal{Q})$, by assuming the existence of psd not sos symmetric $n$-ary quartics for $n \geq 5$.  
For the convenience of the reader we include the following citation from \cite{CL_1}, 

\begin{quotation}
\textit{\textquotedblleft the construction of $f_{n,4} \in S\mathcal{P}_{n,4} \setminus S\Sigma_{n,4} \ (n \geq 4)$ requires considerable effort, so we shall not go into the full details here. Suffice it to record the special form $f_{4,4}=\sum x^2 y^2 + \sum x^2 y z - 2xyzw$. Here the two summations denote the full symmetric sums (w.r.t. the variables $x,y,z,w$); hence the summation lengths are respectively 6 and 12\textquotedblright.} 
\end{quotation}

\noindent We complete their proof by constructing explicit psd not sos symmetric $n$-ary quartics for $n \geq 5$ (see Theorems \ref{ODDvarPSDnotSOSquartic} and \ref{EVENvarPSDnotSOSquartic}). 
These theorems will be further used in \cite{G-K-R-2}, where we consider the question when an even symmetric  psd form is sos.

\section{Analogue of Hilbert's 1888 Theorem for Symmetric forms}
\label{sec:psdNOTsosSymmQuartics}

We revisit the question: for which pairs $(n,2d)$ will a symmetric psd $n$-ary $2d$-ic form be sos?  
We refer to this question as $\mathcal{Q}(S)$.

Choi and Lam in  \cite{CL_1} claimed that the answer to $\mathcal{Q}(S)$, that classifies the pairs $(n, 2d)$ for which a symmetric psd $n$-ary $2d$-ic form is sos, is: 
 \vspace{-0.4cm}

\begin{equation} \label{eq:symm}
S\mathcal{P}_{n,2d}= S\Sigma_{n,2d} \ \text{if and only if} \ n=2 \ \text{or} \  d=1 \ \text{or} \ (n,2d)=(3,4).
\end{equation}
  
One direction of (\ref{eq:symm}) follows from Hilbert's Theorem. Conversely for proving $S\mathcal{P}_{n,2d} \subseteq S\Sigma_{n,2d}$ only if $n=2$ or $d=1$ or $(n,2d) = (3,4)$, they showed that it is enough to find $f \in S\mathcal{P}_{n,2d} \setminus S\Sigma_{n,2d}$ for all pairs $(n,4)$ with $n \geq 4$ and for the pair $(3,6)$, i.e. 
they demonstrated that 
\vspace{-0.3cm}

\begin{equation} \text{if} \ S\Sigma_{n,4}\subsetneq S\mathcal{P}_{n,4} \ \text{for all} \ n \geq 4 \  \text{and} \ S\Sigma_{3,6}\subsetneq S\mathcal{P}_{3,6}, \ \text{then} 
\nonumber \end{equation} 
\begin{equation}  \label{eq: basic cases inequality Symm} S\Sigma_{n,2d} \subsetneq S\mathcal{P}_{n,2d} \ \text{for all} \ n \geq 3, 2d \geq 4 \ \text{and} \ (n,2d) \neq (3,4).\end{equation}

\begin{lem}  \label{irred indef preserves non sosness} Let $f \in \mathcal{F}_{n,2d}$ be a psd not sos form and $p$ an irreducible indefinite form of degree $r$ in $\mathbb{R}[x_1, \ldots, x_n]$. Then $p^2f \in \mathcal{F}_{n,2d+2r}$ is also a psd not sos form. 
\end{lem}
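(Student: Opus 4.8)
The plan is to split the statement into its two assertions. That $p^2f\in\mathcal F_{n,2d+2r}$ and is psd is immediate: $p^2f$ is a form of degree $2r+2d$, and for every $x\in\mathbb R^n$ we have $p(x)^2\ge 0$ and $f(x)\ge 0$, hence $p(x)^2f(x)\ge 0$. All the work is in showing that $p^2f$ is \emph{not} sos, and for this I would argue by contradiction.

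So suppose $p^2f=\sum_{j=1}^{k}h_j^2$ with $h_j\in\mathbb R[x_1,\dots,x_n]$ forms (necessarily of degree $r+d$; recall an sos decomposition of a form may be taken with homogeneous summands). The target is to prove that $p\mid h_j$ for every $j$. Granting this, write $h_j=p\,g_j$, substitute, and cancel the nonzero factor $p^2$ in the integral domain $\mathbb R[x_1,\dots,x_n]$ to obtain $f=\sum_{j=1}^{k}g_j^2$, contradicting the hypothesis that $f$ is not sos (note $f\not\equiv 0$ automatically, since the zero form is trivially sos). To get divisibility, restrict first to the real zero set $Z=\{x\in\mathbb R^n:p(x)=0\}$: on $Z$ one has $\sum_j h_j(x)^2=p(x)^2f(x)=0$, and a sum of squares of real numbers vanishes only if each summand does, so every $h_j$ vanishes identically on $Z$. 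Now invoke that $p$ is irreducible and \emph{indefinite}: since $p$ changes sign it must vanish somewhere, and in fact has a real zero at which the gradient does not vanish, so $Z$ contains a smooth real point of the complex hypersurface $V(p)\subseteq\mathbb C^n$; the real dimension of $Z$ near such a point is $n-1=\dim_{\mathbb C}V(p)$, and since $p$ is irreducible $V(p)$ is irreducible, which forces $Z$ to be Zariski dense in $V(p)$. Hence each $h_j$, vanishing on $Z$, vanishes on all of $V(p)$, i.e.\ $h_j\in I(V(p))$. Finally $(p)$ is prime, hence radical, so the Nullstellensatz gives $I(V(p))=\sqrt{(p)}=(p)$, and therefore $p\mid h_j$ for every $j$, as needed.

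The main obstacle is exactly the density step — that the real points of $V(p)$ are Zariski dense in $V(p)$ — which is where indefiniteness is indispensable: for a definite irreducible form such as $x_1^2+x_2^2$ the real zero set is a single point and the conclusion fails completely. A clean way to package this step is via the real Nullstellensatz: for an irreducible form $p$ the ideal $(p)$ is real, equivalently $I\big(\{p=0\}\cap\mathbb R^n\big)=(p)$, precisely when $p$ changes sign; since $p$ is indefinite this applies and yields $h_j\in(p)$ directly. Either route — the ``sign-changing irreducible forms have a nonsingular real zero'' lemma, or the real-ideal characterization — reduces the statement to the elementary cancellation argument above.
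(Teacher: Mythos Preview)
Your proof is correct and follows the same route as the paper: show each $h_j$ vanishes on the real zero set of $p$, then use that an irreducible sign-changing polynomial generates a real ideal (the paper cites \cite[Theorem~4.5.1]{BCR} for this) to conclude $p\mid h_j$, and cancel. The only difference is that you unpack the real-Nullstellensatz step in detail, whereas the paper invokes the reference directly.
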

\begin{proof} Clearly  $p^2 f$ is psd. 
\noindent  If $p^2 f= \displaystyle \sum_{k} h_k^2$, then for every real tuple $\underline{a}$ with $p(\underline{a})=0$, it follows that $(p^2 f)(\underline{a})=0$.
\noindent This implies $h_k^2 (\underline{a})=0 \ \forall \ k$ (since $h_k^2$ is psd),
\noindent and so on the real variety $p=0$, we have $h_k=0$ as well.

So (using \cite[Theorem 4.5.1]{BCR}), for each $k$, there exists $g_k$ so that $h_k=p g_k$. This gives $f=\displaystyle \sum_{k} g_k^2$, which is a contradiction.
\end{proof}

\begin{prop} \label{prop:(n,4),(3,6)Not equal Implies All} If $S\Sigma_{n,4}\subsetneq S\mathcal{P}_{n,4}$ for all $n \geq 4$ and $S\Sigma_{3,6}\subsetneq S\mathcal{P}_{3,6}$, then 
$S\Sigma_{n,2d}\subsetneq S\mathcal{P}_{n,2d}$ for all $n \geq 3, d \geq 2$ and $(n,2d) \neq (3,4)$.
\end{prop}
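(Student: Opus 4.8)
The plan is to bootstrap from the two crucial families of examples to all remaining pairs $(n,2d)$ by multiplying by suitable squares of irreducible indefinite forms, using Lemma \ref{irred indef preserves non sosness} at each step while keeping careful track of symmetry. First I would reduce to showing the following: for every $n\geq 3$ and $d\geq 2$ with $(n,2d)\neq(3,4)$, there is a symmetric psd not sos $n$-ary $2d$-ic, given as input the symmetric psd not sos forms $f_{n,4}\in S\mathcal P_{n,4}\setminus S\Sigma_{n,4}$ for $n\geq 4$ and $f_{3,6}\in S\mathcal P_{3,6}\setminus S\Sigma_{3,6}$.

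The key step is the degree-raising move: if $f\in S\mathcal P_{n,2d}\setminus S\Sigma_{n,2d}$ and $p$ is a symmetric irreducible indefinite $n$-ary form of even degree $2r$, then $p^2 f$ is symmetric (a product of symmetric forms), and by Lemma \ref{irred indef preserves non sosness} it is psd and not sos; hence $p^2 f\in S\mathcal P_{n,2d+4r}\setminus S\Sigma_{n,2d+4r}$. So I need, for each fixed $n$, a symmetric irreducible indefinite $n$-ary form whose square can be used to climb the degree ladder. The natural choice is $p=p_1:=x_1+\cdots+x_n$ for degree $1$, which is symmetric, indefinite, and irreducible; then $p_1^2$ raises the degree by $2$. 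Starting from $f_{n,4}$ (for $n\geq 4$) and multiplying by powers $p_1^{2(d-2)}$ produces a symmetric psd not sos $n$-ary $2d$-ic for every $d\geq 2$. This already handles all pairs with $n\geq 4$ and $2d\geq 4$. For $n=3$ I would start from $f_{3,6}$ and multiply by $p_1^{2(d-3)}$ to cover $2d=6,8,10,\dots$; the remaining case $2d=4$ is excluded by hypothesis, and $2d=2$ never occurs since psd ternary quadratics are sos. Thus I also need to confirm the base case $n=3$, $2d=6$ is provided and the pair $(3,4)$ is correctly excluded — everything else for $n=3$ is reached by the ladder.

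There is one subtlety to dispatch: I must make sure every pair $(n,2d)$ with $n\geq 3$, $d\geq 2$, $(n,2d)\neq(3,4)$ is genuinely reached. For $n\geq 4$ the seed $f_{n,4}$ together with multiplication by $p_1^2$ reaches all even degrees $\geq 4$; there are no even degrees $2d\geq 4$ left out since the increment is exactly $2$. For $n=3$, the seed $f_{3,6}$ reaches $2d=6,8,10,\dots$, and $2d=4$ is the single excluded case, so the statement is exactly as claimed. I would also remark that one could alternatively use $p=x_1^2+\cdots+x_n^2-c\,p_1^2$ for an appropriate constant $c$ or other symmetric indefinite irreducible forms, but $p_1$ is the cleanest. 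Finally, irreducibility and indefiniteness of $p_1$ are immediate (a linear form is irreducible, and $x_1+\cdots+x_n$ takes both signs), so Lemma \ref{irred indef preserves non sosness} applies verbatim.

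The main obstacle is essentially bookkeeping rather than mathematics: one must verify that the symmetric group action is respected at every multiplication — which it is, since a product of symmetric forms is symmetric — and that the degree arithmetic leaves no gaps. The only genuinely external ingredients are the two seed families $f_{n,4}$ ($n\geq 4$) and $f_{3,6}$, whose existence is precisely the hypothesis of the proposition (and whose explicit construction for $n\geq 5$ is the contribution of this paper, with $f_{4,4}$ due to Choi--Lam and $f_{3,6}$ available from known symmetric examples such as the Robinson or Motzkin-type symmetric sextics). So the proof is short: state the ladder, invoke Lemma \ref{irred indef preserves non sosness} with $p=x_1+\cdots+x_n$, and check the degree count case by case on $n=3$ versus $n\geq 4$.
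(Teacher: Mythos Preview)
Your proposal is correct and follows essentially the same approach as the paper: both multiply the seed forms by powers of $(x_1+\cdots+x_n)^2$ and invoke Lemma~\ref{irred indef preserves non sosness} iteratively to climb the degree ladder while preserving symmetry. (A minor slip: in your ``key step'' you require $p$ to have even degree $2r$, but then take $p_1$ of degree~$1$; this is harmless since Lemma~\ref{irred indef preserves non sosness} imposes no parity condition on $\deg p$.)
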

\begin{proof}   Suppose we have forms $f \in S\mathcal{P}_{n,2d} \setminus S\Sigma_{n,2d}$ for all pairs $(n,4)$ with $n \geq 4$, and for the pair $(3,6)$. Then we can construct symmetric $n$-ary forms of higher degree by taking $(x_{1}+ \ldots + x_n)^{2i}f$, which  can be seen to be in \ $ S\mathcal{P}_{n,2d+2i} \setminus S\Sigma_{n,2d+2i} \ \forall \ i \ \geq 0$, by $i$ applications of Lemma  \ref{irred indef preserves non sosness} with $p=x_{1}+ \ldots + x_n$.
\end{proof}

For the pair $(3,6)$, Robinson  \cite{Rob-1} constructed the symmetric ternary sextic form $R(x,y,z):= x^6+y^6+z^6-(x^4y^2+y^4z^2+z^4x^2+x^2y^4+y^2z^4+z^2x^4)+3x^2y^2z^2$ and showed that it is psd but not sos. For the pair $(4,4)$, Choi and Lam \cite{CL_1} gave the form $f_{4,4} \in S\mathcal{P}_{4,4} \setminus S\Sigma_{4,4}$. So in view of Proposition \ref{prop:(n,4),(3,6)Not equal Implies All}, it remains to find psd not sos symmetric $n$-ary quartics for $n \geq 5$. 

\vspace{0.2cm}

We will now construct explicit forms $f \in S\mathcal{P}_{n,4} \setminus S\Sigma_{n,4}$ for $n \geq 4$. 
For $n \geq 4$, consider the symmetric $n$-ary quartic (studied in \cite{CLR-2})

$$L_n(\underline{x}):=\displaystyle m(n-m)\sum_{i<j}^{}(x_i-x_j)^4 - \Big(\sum_{i<j}^{}(x_i-x_j)^2\Big)^2,$$
where $m=\left \lfloor \frac{n}{2} \right \rfloor.$ We shall show that $L_n$ is psd for all $n$ and $L_n$ is not sos for all odd $n \geq 5$. 

We need an important result (Theorem \ref{atmost 2 distinct comps} below) of Choi, Lam and Reznick \cite{CLR-2}. The same argument was modified in \cite[Theorem 2.3]{Har} to treat even symmetric $n$-ary octics for $n \geq 4$.

\begin{thm} \label{atmost 2 distinct comps} A symmetric $n$-ary quartic $f$ is psd iff $f(\underline{x}) \geq 0$ for every $\underline{x} \in \mathbb{R}^n$ with at most two distinct coordinates (if $n\geq4$), i.e. $\Lambda_{n,2}=\{ \underline{x} \in \mathbb{R}^n \ | \ x_i \in \{r,s\}; r \neq s \}$ is a test set for symmetric $n$-ary quartics. \end{thm}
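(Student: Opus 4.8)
The plan is to prove Theorem~\ref{atmost 2 distinct comps} by exploiting the $S_n$-invariance of $f$ to reduce a global nonnegativity question to a nonnegativity question on the low-dimensional test set $\Lambda_{n,2}$. One direction is trivial: if $f$ is psd then certainly $f(\underline{x})\geq 0$ for every $\underline{x}$, in particular on $\Lambda_{n,2}$. So the content is the converse: assuming $f(\underline{x})\geq 0$ for all points with at most two distinct coordinate values, deduce $f\geq 0$ everywhere. The first step is to fix an arbitrary point $\underline{a}=(a_1,\dots,a_n)\in\mathbb{R}^n$ and consider the orbit average, or more usefully, to restrict $f$ to a well-chosen line or low-dimensional affine slice through $\underline{a}$ on which the invariance forces enough structure.

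The key idea I would carry out: since $f$ is a symmetric quartic, write it in terms of power sums $p_k=\sum x_i^k$ (for $k=1,2,3,4$) or, better, decompose the space of symmetric quartics and show it is spanned by a small explicit basis --- e.g.\ $p_1^4$, $p_1^2p_2$, $p_2^2$, $p_1p_3$, $p_4$. Evaluating $f$ at $\underline{a}$, I would look at the function $t\mapsto f(a_1+t,a_2+t,\dots,a_n+t)$; translation by the all-ones vector interacts nicely with the power-sum basis, and one checks that psd-ness of $f$ is equivalent to psd-ness of its "dehomogenization" along the hyperplane $p_1=0$ (because $p_1$ is, up to scalar, the unique linear symmetric form, $f$ restricted to $p_1=\text{const}$ is controlled by the $p_1=0$ slice). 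On the slice $p_1=0$, a symmetric quartic is a function of $p_2$, $p_3$, $p_4$ only. Then the crucial combinatorial/analytic step is: among points with $p_1=0$ and fixed $p_2$, the extreme points --- where $(p_3,p_4)$ achieves the boundary of its range, hence where $f$ could be minimized if $f$ were, say, a suitable linear function of $p_4$ for fixed $p_1,p_2,p_3$ --- are exactly the points with at most two distinct coordinates. This is a moment-type extremal result: the joint range of $(p_2,p_3,p_4)$ on the sphere-like set is a convex-ish region whose vertices correspond to atomic measures supported on $\leq 2$ points.

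Concretely, I would argue as follows. Fix $\underline{a}$ with $p_1(\underline{a})=0$ (WLOG after translating) and $p_2(\underline{a})$ fixed. Group the coordinates and use a two-step smoothing/exchange argument: if three coordinates $a_i,a_j,a_k$ are pairwise distinct, perturb them keeping $p_1,p_2$ of the triple fixed (a one-parameter family inside a circle in $\mathbb{R}^3$), so that $f$ becomes a quartic polynomial in the perturbation parameter $t$; because two of the five basis monomials are already pinned, $f$ along this curve is affine in $p_4$-of-the-triple, hence monotone in an appropriate reparametrization, so its minimum over the allowed interval of $t$ is attained at an endpoint, where two of the three coordinates have merged. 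Iterating collapses any configuration to one with at most two distinct values while only decreasing $f$; since $f\geq 0$ on all such configurations by hypothesis, $f(\underline{a})\geq 0$. This gives psd-ness.

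The main obstacle I expect is the extremal/smoothing step: verifying that along the chosen one-parameter families the function $f$ really is affine (or at worst monotone after a substitution) in the "free" parameter, so that the minimum migrates to the boundary where coordinates coincide. This requires choosing the perturbation direction so that $p_1$ and $p_2$ (of the whole tuple, or of the active triple with the rest held fixed) are constant, computing how $p_3$ and $p_4$ vary, and checking that the resulting restriction of a general symmetric quartic is linear in the surviving free invariant --- equivalently, that the quartic depends on the triple only through $p_1,p_2$ and a single further quantity that varies monotonically. One must also handle the bookkeeping of the general dimension $n$ and the base case $n\geq 4$ (for $n=3$ a symmetric quartic automatically has at most three coordinate values, so the statement is about the gap between two and three), and confirm that the reduction never increases $f$ but may decrease it, which is exactly what is needed since we only assume $f\geq 0$ on $\Lambda_{n,2}$.
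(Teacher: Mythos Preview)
The paper does not actually prove this theorem: the entire proof reads ``See \cite[Corollary 3.11]{Goel}'', and the result is also noted to be the degree-four case of Timofte's half-degree principle in the remark that follows. So there is no in-paper argument to compare your sketch against; I can only assess your outline on its own merits.

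Your overall framework---write $f$ in the power-sum basis, observe that for fixed $(p_1,p_2)$ the form is an affine function of $(p_3,p_4)$, and then merge coordinates---is exactly the standard route to this result. Two steps, however, do not go through as written. First, the reduction ``WLOG $p_1(\underline a)=0$ after translating'' is illegitimate for a homogeneous form: $f(\underline a+t(1,\dots,1))\neq f(\underline a)$, and the assertion that psd-ness of $f$ is equivalent to psd-ness of $f|_{\{p_1=0\}}$ is false. For instance $f=p_2^2-p_1^4$ (any $n\ge 2$) restricts to $p_2^2\ge 0$ on $\{p_1=0\}$ but satisfies $f(1,\dots,1)=n^2-n^4<0$. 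Fortunately the smoothing argument does not need this reduction; one can work directly on the sphere $p_2=1$.

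Second---and this is the substantive gap---your iteration does not terminate as claimed. When you perturb three individual coordinates carrying distinct values $v_1,v_2,v_3$ until two of them merge to a common value $w$, that $w$ is typically a new number distinct from every other coordinate of $\underline a$; if other, unperturbed coordinates also carry the values $v_1,v_2,v_3$, the total number of distinct values in the full $n$-tuple need not decrease and can even increase. A single Lagrange-multiplier argument at a minimizer on the sphere gives only that each coordinate satisfies one cubic equation, hence ``at most three distinct values''; passing from three to two is precisely the nontrivial content of the theorem, and your sketch does not address it. The published proofs (Choi--Lam--Reznick, Timofte) handle this step with additional work that your outline would still need to supply.
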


\begin{proof} See \cite[Corollary 3.11]{Goel}. 
\end{proof}

\begin{rk} V. Timofte's half degree principle \cite{Timofte-1} gives a complete generalisation of above theorem for both symmetric polynomials (i.e. invariant under the action of the group $S_n$) and even symmetric polynomials (i.e. invariant under the action of the group $S_n \times \mathbb{Z}_{2}^n$) of degree $2d$ in $n$ variables. 
See \cite{Kuhlmann-Kovacec-Riener} for an application of this principle to elementary symmetric functions. 
\end{rk}

\noindent 
For $n=5$, $L_n(\underline{x})$ has been discussed %in 1978 
by A. Lax and P. D. Lax. % in \cite{Lax}. 
They showed \cite[p.72]{Lax} that 
\[
A_5(\underline{x}):= \displaystyle \sum_{i=1}^{5} \prod_{j \neq i}^{} (x_i -x_j) = \frac{1}{8}L_5,
\]
 a psd symmetric quartic in five variables, is not sos.

\begin{prop} \label{psdnaryQuartics}
$L_n$ is psd for all $n$. 
\end{prop}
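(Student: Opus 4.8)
The plan is to invoke the test‑set criterion of Theorem \ref{atmost 2 distinct comps}: since $L_n$ is a symmetric $n$-ary quartic, for $n\geq 4$ it is psd as soon as $L_n(\underline{x})\geq 0$ for every $\underline{x}\in\Lambda_{n,2}$, i.e. for every $\underline{x}$ whose coordinates take at most two distinct values. The small cases $n=2,3$ I would treat separately: for $n=2$ one checks directly that $L_2\equiv 0$, and for $n=3$ either a short direct computation or the half‑degree principle recalled in the remark following Theorem \ref{atmost 2 distinct comps} again reduces matters to two‑valued points.

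The next step is to exploit two structural features of $L_n$: it is homogeneous of degree $4$, and it is built entirely from the pairwise differences $x_i-x_j$, hence invariant under the translations $\underline{x}\mapsto\underline{x}+c(1,\dots,1)$. Given $\underline{x}\in\Lambda_{n,2}$ with coordinate values $r\neq s$, say $r$ occurring $k$ times and $s$ occurring $n-k$ times, translating by $-s(1,\dots,1)$ and rescaling by $(r-s)^{-1}$ yields $L_n(\underline{x})=(r-s)^4 L_n(v_k)$, where $v_k:=(\,\underbrace{1,\dots,1}_{k},\underbrace{0,\dots,0}_{n-k}\,)$. So it suffices to evaluate $L_n$ at the finitely many points $v_k$, $0\leq k\leq n$.

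The computation is then immediate. At $v_k$, the only nonzero summands in either $\sum_{i<j}(x_i-x_j)^4$ or $\sum_{i<j}(x_i-x_j)^2$ are those indexed by a pair $(i,j)$ with exactly one index in the first block, and each such summand equals $1$; there are $k(n-k)$ of them. Hence at $v_k$ both symmetric sums equal $k(n-k)$, and therefore
$$L_n(v_k)=m(n-m)\,k(n-k)-\bigl(k(n-k)\bigr)^2=k(n-k)\bigl(m(n-m)-k(n-k)\bigr).$$
The last step is the elementary fact that, over integers $k\in\{0,1,\dots,n\}$, the quantity $k(n-k)$ attains its maximum exactly at $k=\left\lfloor\frac{n}{2}\right\rfloor=m$ (equivalently at $k=n-m=\left\lceil\frac{n}{2}\right\rceil$), so $0\leq k(n-k)\leq m(n-m)$ and thus $L_n(v_k)\geq 0$ for all $k$. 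By Theorem \ref{atmost 2 distinct comps} this gives $L_n\in\mathcal{P}_{n,4}$.

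I do not anticipate a serious obstacle: the only points needing care are the reduction from the full two‑valued test set to the normalized points $v_k$ via translation‑ and scaling‑invariance, and the bookkeeping of which index pairs contribute to the two symmetric sums. The genuinely non‑routine part of the paper — proving that $L_n$ fails to be sos for odd $n\geq 5$ — lies beyond this proposition.
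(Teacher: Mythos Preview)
Your proof is correct and follows essentially the same route as the paper: invoke the two-value test set (Theorem~\ref{atmost 2 distinct comps}), evaluate $L_n$ at points with $k$ copies of one value and $n-k$ of another, and verify nonnegativity of the resulting expression $k(n-k)\bigl(m(n-m)-k(n-k)\bigr)$. The only cosmetic differences are that you first normalize via translation and scaling to the $0/1$ vectors $v_k$ (the paper carries the factor $(r-s)^4$ throughout), and that you argue $k(n-k)\le m(n-m)$ by noting that $k\mapsto k(n-k)$ is maximized at $k=m$, whereas the paper factors $m(n-m)-k(n-k)=(m-k)(n-m-k)$ and observes there is no integer strictly between $m$ and $n-m$; these are equivalent.
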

\begin{proof} In view of Theorem 
\ref{atmost 2 distinct comps}, it is enough to prove that $L_n \geq 0$ on the test set $\Lambda_{n,2}=\{(\underbrace{r, \ldots, r}_{k}, \underbrace{s, \ldots, s}_{n-k}) \ | \ r \neq s \in \mathbb{R}$; $0 \leq k \leq n \}$. 

Now for $\underline{x} \in \Lambda_{n,2}$, 
\[
x_i-x_j= \begin{cases} \pm (r-s) \neq 0 , \ \text{for} \ k(n-k) \ \text{terms}, \\ 0 \ \ \ \ \ \ \ \ \ \ \ \ \  \ \ \ \ \ , \ \text{otherwise} \end{cases}
\]

\vspace{0.2cm}

\noindent so $L_n$ takes the value 

\[
 \begin{gathered}
 L_n(\underline{x})= m(n-m) k(n-k)(r-s)^4-[k(n-k)(r-s)^2]^2 \\
%=k(n-k)(r-s)^4[m(n-m)-k(n-k)] \\
=k(n-k)(r-s)^4[(m-k)(n-m-k)], 
\end{gathered}
\]
which is non-negative since there is no integer between $m$ and $n-m$.
\end{proof}

\begin{df} \index{set!$0/1$}
\index{point!$0/1$}
Let $\{0,1\}^n$ be the set of all $n$-tuples $\underline{x} = (x_1, \ldots, x_n)$ with $x_i \in \{0,1\}$ for all $i = 1, \ldots, n$. A subset $S \subset \{0,1\}^n$ is called a \textbf{0/1 set} and $\underline{x} \in \{0,1\}^n$ a \textbf{0/1 point}. 
\end{df}

\begin{lem} \label{QUAD h identically 0} Suppose $n \geq 4$ and $h(x_1, \ldots, x_n)$ is a quadratic form that vanishes on all $0/1$ points with $m$ or $(m+1)$ $1$'s, where $m =\left \lfloor \frac{n}{2} \right \rfloor$, i.e. $h(\underline{x})=0$ for all $\underline{x}$ with $m$ or $(m+1)$ $1$'s and $\begin{cases} (m+1) \ \text{or} \ m \ 0 \text{'s}  \ \text{(respectively) for odd} \ n=2m+1;  \\ m \ \text{or} \ (m-1) \ 0  \text{'s}  \ \text{(respectively
) for even} \ n=2m. \end{cases}$

\noindent  Then $h$ is identically zero. 
\end{lem}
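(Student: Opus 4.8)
The plan is to expand $h(x_1,\dots,x_n) = \sum_{i=1}^{n} a_{ii}x_i^2 + \sum_{1\le i<j\le n} a_{ij}x_ix_j$ and turn the hypothesis into linear relations among the coefficients $a_{ij}$. Identify a $0/1$ point having $k$ ones with the subset $S\subseteq\{1,\dots,n\}$ of indices where it equals $1$; then evaluating $h$ at that point gives $h|_S := \sum_{i\in S}a_{ii} + \sum_{\{i,j\}\subseteq S}a_{ij}$, and the hypothesis says $h|_S=0$ for every $S$ with $|S|=m$ or $|S|=m+1$. The goal is to deduce that $a_{ij}=0$ for all $i,j$.

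First I would take differences of consecutive levels: for any $(m+1)$-subset $S$ and any $p\in S$, the set $S\setminus\{p\}$ has $m$ elements, so $h|_S = h|_{S\setminus\{p\}} = 0$, and subtracting yields
\[
a_{pp} + \sum_{i\in S\setminus\{p\}} a_{ip} = 0 \qquad (\star).
\]
Next comes the key step: fix an index $p$ and two distinct indices $r\ne t$, both different from $p$. Because $m=\lfloor n/2\rfloor$ and $n\ge 4$ one has $n\ge m+2$, so there is an $(m-1)$-subset $U\subseteq\{1,\dots,n\}\setminus\{p,r,t\}$; applying $(\star)$ to the two $(m+1)$-subsets $U\cup\{p,r\}$ and $U\cup\{p,t\}$ and subtracting cancels $a_{pp}$ and every term $a_{ip}$ with $i\in U$, leaving $a_{rp}=a_{tp}$. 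Hence for each $p$ there is a scalar $b_p$ with $a_{ip}=b_p$ for all $i\ne p$; the symmetry $a_{ip}=a_{pi}$ then forces $b_p=b_i$ for all $i\ne p$, so all the $b_p$ coincide with a single value $b$, i.e. $a_{ij}=b$ whenever $i\ne j$. Substituting this into $(\star)$ gives $a_{pp}+mb=0$, so $a_{pp}=-mb$ for every $p$; finally, evaluating $h|_S=0$ on any $m$-subset $S$ gives $m(-mb)+\binom{m}{2}b=-\tfrac12 m(m+1)b=0$, and since $m\ge 2$ we conclude $b=0$, whence all $a_{ij}=0$ and $h\equiv 0$.

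I expect the middle step — showing the off-diagonal coefficients are all equal — to be the only genuine obstacle, since it is the sole place a combinatorial argument is required, and the estimate $n\ge m+2$ (which, for $m=\lfloor n/2\rfloor$, is exactly the hypothesis $n\ge 4$) is precisely what guarantees that the two $(m+1)$-subsets used there exist and differ in a single coordinate. The remaining steps are routine linear bookkeeping on the coefficients.
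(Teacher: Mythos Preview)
Your proof is correct and follows essentially the same route as the paper's: the paper likewise writes $h=\sum a_i x_i^2+\sum_{i<j}a_{ij}x_ix_j$, subtracts the value at an $m$-subset from that at an $(m{+}1)$-subset to obtain your relation $(\star)$, compares two instances of $(\star)$ built over a common $(m{-}1)$-set to force all off-diagonal coefficients equal, reads $a_k=-mu$ from $(\star)$, and finishes by evaluating at a point with $m$ ones. The only cosmetic difference is notation; your observation that the needed $(m{-}1)$-set exists precisely because $n\ge m+2$ (equivalently $n\ge 4$) is the same counting the paper uses implicitly.
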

\begin{proof} Set $h(x_1, \ldots, x_n)=\displaystyle \sum_{i=1}^{n} a_i x_i^2 + \sum_{i < j}^{} a_{ij} x_i x_j$. Fix distinct $i, j, k$ and let $S$ such that $|S|=m-1$, be a set of indices not containing $i, j, k$.
\noindent Then $h=0$ on $\underline{x}$, where the $1$'s on $\underline{x}$ occur precisely on $S\cup \{i\}$, $S\cup \{i, k\}$, $S\cup \{j\}$, $S\cup \{j, k\}$. So we have:

\begin{equation} \hspace{-3.6cm} \text{on} \ S\cup \{i\}: \ \ \ \ ~ 0= \displaystyle \sum_{l \in S}^{} a_l + a_i + \sum_{l<l' \in S}^{} a_{ll'} + \sum_{l \in S}^{} a_{il}, \nonumber \end{equation} 

\begin{equation}  \hspace{-0.6cm} \text{on} \ S\cup \{i,k\}: \ 0= \displaystyle \sum_{l \in S}^{} a_l + a_i + a_k + \sum_{l<l' \in S}^{} a_{ll'} + \sum_{l \in S}^{} a_{il}+\sum_{l \in S}^{} a_{kl} +a_{ik}. \nonumber \end{equation}

\noindent Subtracting above two equations gives:
\begin{equation} \label{i,k} \displaystyle a_k + \sum_{l \in S}^{} a_{kl} +a_{ik} =0. \end{equation}

\noindent Doing the same with $S\cup \{j\}$ and $S\cup \{j, k\}$ gives:
\begin{equation} \label{j,k} \displaystyle a_k + \sum_{l \in S}^{} a_{kl} +a_{jk} =0. \end{equation}

\noindent Thus $a_{ik}=a_{jk}$ (from equations (\ref{i,k}) and (\ref{j,k})).

 Since $i, j, k$ are arbitrary, $a_{ik}=a_{jk}=a_{jl}$ for any $l \neq i, j, k$. So all the coefficients of $x_i x_j$ (for $i \neq j$) in $h$ are equal, say $a_{ij}=u; i \neq j$.

It follows from equation (\ref{i,k}) that $a_k +mu=0$. So \ $a_k = - mu \ \forall \ k$, which gives:
\begin{equation} \displaystyle h(x_1, \ldots, x_n)= u \left(-m \sum_{i=1}^{n} x_i^2 + \sum_{i < j}^{} x_i x_j \right). \nonumber \end{equation}

\noindent But then \ $h(\underbrace{1,\ldots, 1}_{m}, 0, \ldots, 0)=0$ \ gives \ 
$\displaystyle u \left(-m(m) + \frac{m(m-1)}{2} \right)=0,$
which implies $u=0$, which implies $h=0$.
\end{proof}

\begin{thm} \label{ODDvarPSDnotSOSquartic}
If $n \geq 5$ is odd, then $L_n$ is not sos.
\end{thm}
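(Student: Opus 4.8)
The plan is to argue by contradiction: assume $L_n$ is a sum of squares and use Lemma~\ref{QUAD h identically 0} to show that this forces $L_n \equiv 0$, which is absurd.

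First I would reduce to quadratic summands. Since $L_n$ is a form of degree $4$, it is classical that any sum-of-squares representation can be taken in the form $L_n = \sum_{k} h_k^2$ with each $h_k$ a quadratic form (a form of degree $2d$ that is sos is a sum of squares of forms of degree $d$). So suppose such a representation exists.

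Next I would exhibit the relevant zeros of $L_n$. The computation already made in the proof of Proposition~\ref{psdnaryQuartics} shows that at a $0/1$ point with exactly $k$ ones (take $r=1$, $s=0$),
\[
L_n = k(n-k)(m-k)(n-m-k).
\]
Since $n = 2m+1$ is odd we have $n-m = m+1$, so this value equals $k(n-k)(m-k)(m+1-k)$, which vanishes both for $k = m$ and for $k = m+1$. As $L_n$ is symmetric, it therefore vanishes at every $0/1$ point having exactly $m$ or exactly $m+1$ ones. At any such point $\underline{a}$ we get $0 = L_n(\underline{a}) = \sum_k h_k(\underline{a})^2$, hence $h_k(\underline{a}) = 0$ for every $k$, each summand being non-negative. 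Thus each $h_k$ is a quadratic form vanishing on all $0/1$ points with $m$ or $m+1$ ones; since $n \geq 5 \geq 4$, Lemma~\ref{QUAD h identically 0} gives $h_k \equiv 0$ for all $k$, whence $L_n \equiv 0$. This contradicts, for instance, $L_n(1,0,\ldots,0) = (n-1)(m-1)m > 0$ (valid since $m \geq 2$ when $n \geq 5$), so $L_n$ is not sos.

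Since the real content has been isolated in Lemma~\ref{QUAD h identically 0}, which is already proved, no serious obstacle remains; the only delicate point is that the argument is genuinely restricted to odd $n$. For even $n = 2m$ one has $n-m = m$, so $L_n = k(n-k)(m-k)^2 \geq 0$ vanishes on $0/1$ points only when $k = m$, i.e. only at points with exactly $n/2$ ones — too small a vanishing locus to apply the Lemma — and a separate construction is required there (Theorem~\ref{EVENvarPSDnotSOSquartic}).
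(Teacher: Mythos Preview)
Your proof is correct and follows essentially the same route as the paper: assume an sos decomposition, observe that $L_{2m+1}$ vanishes at all $0/1$ points with $m$ or $m+1$ ones, conclude each quadratic summand vanishes there, and invoke Lemma~\ref{QUAD h identically 0} to force all summands to be zero. You add a little extra detail (why the summands may be taken quadratic, and an explicit nonzero value of $L_n$ to clinch the contradiction) and a remark on why the argument fails for even $n$, but the core argument is identical.
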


\begin{proof} Fix odd $n\geq 5, n=2m+1$. Then 

\vspace{-0.3cm}

$$L_{2m+1}= \displaystyle m(m+1)\sum_{i<j}^{}(x_i-x_j)^4 - \Big(\sum_{i<j}^{}(x_i-x_j)^2\Big)^2.$$

\noindent  If $L_{2m+1}=\displaystyle \sum_{t} h_t^2$, then $L_{2m+1}(\underline{x})=0 \Rightarrow$ each $h_t(\underline{x})=0$, for any $\underline{x} \in \mathbb{R}^n$.

In particular,  $L_{2m+1}(\underline{x})=0$ when $\underline{x}$ has $m$ or $(m+1)$ $1$'s and $(m+1)$ or $m$ $0$'s. 
So, $h_t(\underline{x})=0$ for $\underline{x}$ with $m$ or $(m+1)$ $1$'s and $(m+1)$ or $m$ $0$'s respectively. Write 
\[
h_t(\underline{x})=\displaystyle \sum_{i=1}^{n} a_i x_i^2 + \sum_{i <  j}^{n} a_{ij} x_i x_j.
\]
Then by Lemma \ref{QUAD h identically 0}, we get $h_t =0$. Hence $L_{2m+1}$ is not sos.
\end{proof}

Now we construct $f \in S\mathcal{P}_{2m,4} \setminus S\Sigma_{2m,4}$ for $m\geq 2$. 
\vspace{0.1cm}

Unfortunately, 
\[
L_{2m}(\underline{x})=\displaystyle \sum_{i<j}^{}(x_i-x_j)^2\Big(-(x_1+\ldots+x_{2m})+m(x_i+x_j)\Big)^2
\] 
(see \cite[Proposition 3.13]{Goel}) is sos, and so we need a different example in $S\mathcal{P}_{2m,4} \setminus S\Sigma_{2m,4}$. For  $2m \geq 4$, let 
\[
C_{2m}(x_1, \ldots, x_{2m}):= L_{2m+1}(x_1, \ldots, x_{2m}, 0).
\] 

\noindent Trivially, $C_{2m}$ is a symmetric $2m$-ary quartic and psd. We shall show it is not sos. 

\begin{thm} \label{EVENvarPSDnotSOSquartic}   For $m \geq 2$, $C_{2m}(x_1, \ldots, x_{2m})$ is not sos.
\end{thm}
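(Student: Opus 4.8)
The plan is to run essentially the same argument used for the odd case (Theorem \ref{ODDvarPSDnotSOSquartic}). Suppose for contradiction that $C_{2m}$ is sos. Since $C_{2m}$ is a quartic form, we may then write $C_{2m} = \sum_{t} h_t^2$ with each $h_t$ a quadratic form in $x_1, \ldots, x_{2m}$. The goal is to exhibit enough real zeros of $C_{2m}$ — on which every $h_t$ is therefore forced to vanish — to bring Lemma \ref{QUAD h identically 0} to bear.

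First I would pin down the relevant zeros. By the evaluation carried out in the proof of Proposition \ref{psdnaryQuartics} (take $n = 2m+1$, $r = 1$, $s = 0$), the form $L_{2m+1}$ takes the value $k(2m+1-k)(m-k)(m+1-k)$ at a $0/1$ point with $k$ ones, where $m = \lfloor (2m+1)/2 \rfloor$; in particular it vanishes whenever $k \in \{m, m+1\}$. Now if $\underline{x} \in \{0,1\}^{2m}$ has $m$ or $m+1$ ones, then the padded tuple $(\underline{x},0) \in \{0,1\}^{2m+1}$ has exactly the same number of ones, so $C_{2m}(\underline{x}) = L_{2m+1}(\underline{x},0) = 0$. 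Hence $C_{2m}$, and therefore each $h_t$ (as each $h_t^2$ is psd), vanishes on every $0/1$ point of $\mathbb{R}^{2m}$ with $m$ or $m+1$ ones.

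Since $2m \geq 4$ and $\lfloor 2m/2 \rfloor = m$, this is precisely the hypothesis of Lemma \ref{QUAD h identically 0} for $n = 2m$, so each $h_t$ is identically zero, and therefore $C_{2m} \equiv 0$. This is absurd: evaluating at $(1,0,\ldots,0)$ gives $C_{2m}(1,0,\ldots,0) = L_{2m+1}(1,0,\ldots,0) = m(m+1)\cdot 2m - (2m)^2 = 2m^2(m-1) \neq 0$ for $m \geq 2$. This contradiction shows $C_{2m}$ is not sos.

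I do not expect a genuine obstacle here; the only point requiring care is the floor-function bookkeeping — checking that the threshold $m = \lfloor (2m+1)/2 \rfloor$ that governs the vanishing of $L_{2m+1}$ coincides with the $m = \lfloor 2m/2 \rfloor$ appearing in Lemma \ref{QUAD h identically 0}, so that restricting $L_{2m+1}$ to the hyperplane $x_{2m+1} = 0$ lands exactly on the family of $0/1$ points the lemma was designed to handle. Everything else (the standard fact that an sos quartic form is a sum of squares of quadratic forms, and the two explicit evaluations above) is routine.
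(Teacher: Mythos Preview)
Your proposal is correct and follows essentially the same approach as the paper: assume an sos decomposition into quadratic forms $h_t$, observe that $C_{2m}$ vanishes on the $0/1$ points with $m$ or $m+1$ ones, and invoke Lemma~\ref{QUAD h identically 0} to force each $h_t$ to vanish identically. You are slightly more careful than the paper in two places---explicitly justifying the relevant zeros via the computation in Proposition~\ref{psdnaryQuartics}, and explicitly verifying $C_{2m}\not\equiv 0$ to close the contradiction---but the argument is the same.
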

\begin{proof} If $C_{2m}=\displaystyle \sum_{t} h_t^2$, then $C_{2m}(\underline{x})=0 \Rightarrow$ each $h_t(\underline{x})=0$, for any $\underline{x} \in \mathbb{R}^n$.

\noindent In particular,  $C_{2m}(\underline{x})=0$ when $\underline{x}$ has $m$ or $(m+1)$ $1$'s and $m$ or $(m-1)$ $0$'s. 
So, $h_t(\underline{x})=0$ for $\underline{x}$ with $m$ or $(m+1)$ $1$'s and $m$ or $(m-1)$ $0$'s respectively.

Write 
\[h_t(\underline{x})=\displaystyle \sum_{i=1}^{n} a_i x_i^2 + \sum_{i <  j}^{n} a_{ij} x_i x_j.
\]
Then by Lemma \ref{QUAD h identically 0}, we get $h_t =0$. Hence, $C_{2m}$ is not sos.
\end{proof}
 
To sum up, the answer to $\mathcal{Q}(S)$ can be summarised by  the same chart as for Hilbert's Theorem, given in the Introduction.

\section*{Acknowledgements}
This paper contains results from the Ph.D. Thesis \cite{Goel}. Charu Goel is thankful to MFO for inviting her as Leibnitz-Graduate-Student in April 2014. 
 Bruce Reznick wishes to thank the Simons Foundation: A Simons Visiting Professorship
supported his visit to the University of Konstanz in April 2014 and a Simons Collaboration Grant
supported his participation at MFO.

\noindent 


\begin{thebibliography}{}


\bibitem {BCR} J. Bochnak, M. Coste, M.-F. Roy, Real algebraic geometry, Volume 95. Springer Berlin, 1998.

\bibitem {Choi} M. D. Choi, Positive semidefinite biquadratic forms. Linear Algebra Appl. 12 (1975), 95-100.

\bibitem{CL_1} M. D. Choi, T. Y. Lam, An old question of Hilbert, Proc. Conf. quadratic forms, Kingston 1976, Queen's Pap. Pure Appl. Math.46 (1977), 385-405. 

\bibitem{CL-2} M. D. Choi, T. Y. Lam, Extremal positive semidefinite forms, Math. Ann. 231, no.1 (1977), 1-18.

\bibitem {CLR-2} M. D. Choi, T. Y. Lam and B. Reznick, Symmetric quartic forms, unpublished, 1980.

\bibitem {Goel} C. Goel, Extension of Hilbert's 1888 Theorem to Even Symmetric Forms, Dissertation, University of Konstanz, 2014.

\bibitem {G-K-R-2} C. Goel, S. Kuhlmann, B. Reznick,  The Analogue of Hilbert's 1888 Theorem for even symmetric forms, in preparation. %arXiv:1505.08145.

\bibitem{Har} W. R. Harris, Real Even Symmetric Ternary Forms, J. Algebra 222, no. 1  (1999), 204-245.

%\bibitem{Har-2} W. R. Harris, Real Even Symmetric Forms, Thesis, University of Illinois at Urbana-Champaign, 1992.

\bibitem {Hilb_1} D. Hilbert, \"{U}ber die Darstellung definiter Formen als Summe von Formenquadraten, Math. Ann., 32 (1888), 342-350;  Ges. Abh. 2, 154-161, Springer, Berlin, reprinted by Chelsea, New York, 1981.

\bibitem {Kuhlmann-Kovacec-Riener} S. Kuhlmann, A. Kovacec and C. Riener, Note on extrema of linear combinations of elementary symmetric functions, Journal of Linear and Multilinear Algebra, 60 (2012), 219-224.

\bibitem{Lax} A. Lax and P. D. Lax, On Sums of  Squares, Linear Algebra Appl. 20 (1978), 71-75.

\bibitem{Motz-1} T.S. Motzkin, The arithmetic-geometric inequality, in Inequalities, Oved Shisha (ed.) Academic Press (1967), 205-224.

\bibitem {Rez} B. Reznick, Forms derived from the arithmetic-geometric inequality, Math. Ann. 283 (1989), 431-464.

\bibitem {Rob-1} R. M. Robinson, Some definite polynomials which are not sums of squares of real polynomials; Selected questions of algebra and logic, Acad. Sci. USSR (1973), 264-282, Abstracts in Notices Amer. Math. Soc. 16 (1969), p. 554.

\bibitem {Schmuedgen} K. Schm\"{u}dgen, An example of  a positive polynomial which is not a sum of squares of polynomials. A positive, but not strongly positive functional. Math. Nachr. 88 (1979), 385-390. 

\bibitem {Timofte-1} V. Timofte, On the positivity of symmetric polynomial functions. Part I: General results. J. Math Anal. Appl. 284 ( 2003), 174-190. 

 \end{thebibliography}
\end{document}